\newtheorem{theorem}{Theorem}
\newtheorem{lemma}[theorem]{Lemma}
\newtheorem{proposition}[theorem]{Proposition}
\newtheorem{corollary}[theorem]{Corollary}
\newtheorem{example}[theorem]{Example}
\newtheorem{question}{Question}
\newtheorem{definition}[theorem]{Definition}
\title{On the cardinality of almost discretely Lindel\"of spaces}
\author{Angelo Bella}
\address{
Department of Mathematics and Computer Science \\
University of Catania \\
Citt\'a universitaria\\  
viale A. Doria 6 \\
95125 Catania, Italy}
\email{bella@dmi.unict.it}
\author{Santi Spadaro}
\address{Instituto de Matematica e Estatistica (IME-USP) \\ Universidade de Sao Paulo \\ Rua do Matao, 1010 - Cidade Universitaria \\ 05508-090 Sao Paulo - SP \\ Brazil}
\curraddr{Department of Mathematics and Computer Science \\
University of Catania \\
Citt\'a universitaria\\  
viale A. Doria 6 \\
95125 Catania, Italy}
\email{santidspadaro@gmail.com}
\subjclass[2000]{Primary: 54A25, 54D20; Secondary: 54D35, 54D10, 54D55}
\keywords{Cardinal inequality, Lindel\"of space, Arhangel'skii Theorem, elementary submodel, left-separated set, right-separated set, discrete set, free sequence}
\begin{document}
\maketitle

\begin{abstract} 
A space is said to be \emph{almost discretely Lindel\"of} if every discrete subset can be covered by a Lindel\"of subspace. In \cite{JTW}, Juh\'asz, Tkachuk and Wilson asked whether every almost discretely Lindel\"of first-countable Hausdorff space has cardinality at most continuum. We prove that this is the case under $2^{<\mathfrak{c}}=\mathfrak{c}$ (which is a consequence of Martin's Axiom, for example) and for Urysohn spaces in ZFC, thus improving a result by Juh\'asz, Soukup and Szentmikl\'ossy from \cite{JSS}. We conclude with a few related results and questions.
\end{abstract}

\section{Introduction}

In \cite{A} Arhangel'skii published his celebrated theorem stating that every Lindel\"of first-countable Hausdorff space has cardinality at most continuum. Besides solving a long standing question due to Alexandroff and Urysohn, Arhangel'skii's Theorem gave a definite boost to the area of cardinal invariants in topology, inspiring new techniques, results and questions that continue to be the object of current research (see \cite{H} for a survey on Arhangel'skii's Theorem and its legacy). 

Recall that a space is said to be \emph{discretely Lindel\"of} if the closure of every discrete set is Lindel\"of. A well-known question due to Arhangel'skii asks whether every regular discretely Lindel\"of space is Lindel\"of.

A space $X$ is defined to be \emph{almost discretely Lindel\"of} \cite{JTW} if for every discrete set $D \subset X$ there is a Lindel\"of subspace $L$ of $X$ such that $D \subset L$. Of course every discretely Lindel\"of space is almost discretely Lindel\"of. Any example of an $S$-space (a regular hereditarily separable non-Lindel\"of space) provides an (alas, only consistent) example of an almost discretely Lindel\"of non-Lindel\"of regular space. It is still open whether there exists an example of such a space in ZFC.

The authors of \cite{JTW} prove that every almost discretely Lindel\"of first-countable space has cardinality at most $2^{\mathfrak{c}}$ and ask whether every almost discretely Lindel\"of first-countable Hausdorff space has cardinality bounded by the continuum. We prove that this is the case for Urysohn spaces and for Hausdorff spaces under $2^{<\mathfrak{c}}=\mathfrak{c}$. Actually we prove a little more than that, namely: 1) every almost discretely Lindel\"of sequential space such that $\psi(X) \leq \mathfrak{c}$ has cardinality at most continuum and 2) every almost discretely Lindel\"of Hausdorff space such that $\psi_c(X) \cdot t(X)=\omega$ has cardinality at most continuum. 

We should mention that in \cite{JSS} Juh\'asz, Soukup and Szentmikl\'ossy proved that every almost discretely Lindel\"of first-countable \emph{regular} space has cardinality bounded by the continuum.

We conclude by exploring a few further generalizations and related results.

In our proofs we will often use elementary submodels of the structure $(H(\mu), \epsilon)$. Dow's survey \cite{D} is enough to read our paper, and we give a brief informal refresher here. Recall that $H(\mu)$ is the set of all sets whose transitive closure has cardinality smaller than $\mu$. When $\mu$ is regular uncountable, $H(\mu)$ is known to satisfy all axioms of set theory, except the power set axiom. We say, informally, that a formula is satisfied by a set $S$ if it is true when all bounded quantifiers are restricted to $S$. A set $M \subset H(\mu)$ is said to be an elementary submodel of $H(\mu)$ (and we write $M \prec H(\mu)$) if a formula with parameters in $M$ is satisfied by $H(\mu)$ if and only if it is satisfied by $M$. 

The downward L\"owenheim-Skolem theorem guarantees that for every $S \subset H(\mu)$, there is an elementary submodel $M \prec H(\mu)$ such that $|M| \leq |S| \cdot \omega$ and $S \subset M$. This theorem is enough in many applications, but it is often useful (especially in cardinal bounds for topological spaces) to have the following closure property. We say that $M$ is $\kappa$-closed if for every $S \subset M$ such that $|S| \leq \kappa$ we have $S \in M$. For every countable set $S \subset H(\mu)$ there is always a $\kappa$-closed elementary submodel $M \prec H(\mu)$ such that $|M|=2^{\kappa}$ and $S \subset M$.

The following theorem is also used often: let $M \prec H(\mu)$ such that $\kappa + 1 \subset M$ and $S \in M$ be such that $|S| \leq \kappa$. Then $S \subset M$.

All spaces under consideration are assumed to be $T_1$. Undefined notions can be found in \cite{E} for topology and \cite{Ku} for set theory. Our notation regarding cardinal functions mostly follows \cite{J}. In particular, $\psi(X)$ and $t(X)$, denote the pseudocharacter and tightness of $X$ respectively. We recall the definition of these two important cardinal functions, given that they are essential for many of the results in our paper.

Let $A$ be a subset of $X$. The \emph{pseudocharacter of $A$ in $X$} ($\psi(A,X)$) is defined as the minimum cardinal $\kappa$ such that $A$ is the intersection of $\kappa$ many open sets. We denote $\psi(\{x\}, X)$ by $\psi(x,X)$. The pseudocharacter of the space $X$ is defined as $\psi(X)=\sup \{\psi(x,X): x \in X \}$.

The \emph{tightness of the point $x$ in the space $X$} ($t(x,X)$) is defined as the minimum cardinal $\kappa$ such that for every set $A \subset X$ such that $x \in \overline{A} \setminus A$ there is a $\leq \kappa$-sized set $B \subset A$ such that $x \in \overline{B}$. The tightness of the space $X$ is defined as $t(X)=\sup \{t(x,X): x \in X \}$.

\section{The main results}

Recall that a space is right-separated if and only if it admits a well-ordering where every initial segment is open. It is well-known and easy to prove that a space is right-separated if and only if it is \emph{scattered} (that is, every non-empty subset contains an isolated point). We denote by $h(X)$ the supremum of the cardinalities of the \emph{right-separated} subsets of $X$. As is well known (see, for example, \cite{J}, 2.9), $hL(X)=h(X)$ for every space $X$, where $hL(X)$ denotes the hereditarily Lindel\"of degree of $X$.

We denote by $g(X)$ the supremum of the cardinalities of the closures of discrete sets in $X$. Since every scattered space has a dense discrete subset we have $h(X) \leq g(X)$.

We start with an observation contained in the proof of Theorem 4 from \cite{JSS} which we would like to isolate for the convenience of the reader.

\begin{lemma} \label{hlemma} \cite{JSS}
Let $X$ be an almost discretely Lindel\"of $T_2$ space $X$. Then $hL(X) \leq 2^{\chi(X)}$.
\end{lemma}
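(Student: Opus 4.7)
The plan is to bound right-separated subsets of $X$ by $2^{\chi(X)}$, which suffices because $hL(X)=h(X)$. So let $\kappa=\chi(X)$, let $R\subset X$ be right-separated, and aim to show $|R|\le 2^\kappa$. Since right-separated equals scattered, the set $D$ of points of $R$ that are isolated in $R$ is nonempty; a standard Cantor--Bendixson argument applied to the open set $R\setminus\overline{D}^R$ shows that this set must be empty, so $D$ is dense in $R$. Moreover, since the relative topology of $D$ from $R$ coincides with that from $X$, $D$ is discrete as a subspace of $X$.

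Now I would invoke the almost discrete Lindel\"of hypothesis to capture $D$ inside a Lindel\"of subspace $L\subset X$. Since $L$ is Lindel\"of and Hausdorff with $\chi(L)\le\chi(X)=\kappa$, Arhangel'skii's inequality $|L|\le 2^{L(L)\cdot\chi(L)}$ gives $|L|\le 2^\kappa$, hence $|D|\le 2^\kappa$.

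The last step is to pass from $D$ to its closure in order to recover all of $R$. Since $D$ is dense in $R$, one has $R\subset\overline{D}^X$, and so it is enough to bound $|\overline{D}^X|$. For this I would use the classical Hajnal--Juh\'asz bound $|Y|\le d(Y)^{\chi(Y)}$ valid in any Hausdorff space $Y$, applied to $Y=\overline{D}^X$: since $D$ is dense in $\overline{D}^X$ and the character does not increase when passing to subspaces, one gets $|\overline{D}^X|\le |D|^\kappa\le(2^\kappa)^\kappa=2^\kappa$, and thus $|R|\le 2^\kappa$ as required.

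The main obstacle, such as it is, lies in step one: making sure the isolated-point set $D$ that we extract from the right-separated (equivalently scattered) set $R$ is simultaneously (i) dense in $R$, so that $R\subset\overline{D}^X$, and (ii) discrete in $X$ itself, so that the almost discretely Lindel\"of hypothesis applies to it. Both points are standard but must be verified; everything else reduces to a direct appeal to two classical Hausdorff cardinal inequalities (Arhangel'skii's theorem and the $d^\chi$ bound on $|X|$).
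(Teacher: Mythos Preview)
Your argument is correct and follows essentially the same route as the paper. The paper packages the first step as the inequality $hL(X)=h(X)\le g(X)$ (where $g(X)$ is the supremum of cardinalities of closures of discrete sets, and the inequality holds because every scattered space has a dense discrete subset), and then bounds $g(X)$ by $2^{\chi(X)}$ using exactly your two ingredients: Arhangel'skii's theorem applied to the Lindel\"of $L\supset D$, and the Hajnal--Juh\'asz bound $|\overline{D}|\le |D|^{\chi(X)}$.
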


\begin{proof}
Recall (see for example 2.5 of \cite{J}) that $|Y| \leq d(Y)^{\chi(Y)}$ for every Hausdorff space $Y$ and hence $|\overline{D}| \leq |D|^{\chi(X)}$ for every $D \subset X$. But since $X$ is almost discretely Lindel\"of, for every discrete set $D \subset X$, there is a Lindel\"of space $L \subset X$ such that $D \subset L$. It follows that $|\overline{D}| \leq 2^{\chi(X)}$, by Arhangel'skii's Theorem. Taking suprema we obtain that $g(X) \leq 2^{\chi(X)}$. Now $hL(X) \leq g(X)$.
\end{proof}

\begin{definition} \cite{V}
For any space $X$ and any set $A\subseteq X$, $Cl_\theta(A)$ is the set of all points $x$ such that $\overline{U} \cap A \neq \emptyset $ for every neighbourhood $U$ of $x$. $A$ is $\theta$-closed if $A=Cl_\theta(A)$. The $\theta$-closure $[A]_\theta$ is the smallest $\theta$-closed set containing $A$.
\end{definition}

\begin{lemma}\label{thetaclosurepsi} Let $X$ be an almost discretely
Lindel\"of Hausdorff space, and $A$ be a $\theta$-closed set. Then $\psi(A,X) \leq 2^{\chi(X)}$, that is $A$ is the intersection of a family of $2^{\chi(X)}$ open sets.
\end {lemma}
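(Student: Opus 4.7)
The plan is to combine the previous lemma (which gives $hL(X)\le 2^{\chi(X)}$) with the defining property of $\theta$-closed sets. The idea is very direct: use $\theta$-closedness to produce an open cover of $X\setminus A$ by sets whose closures miss $A$, and then thin this cover to size at most $2^{\chi(X)}$ using hereditary Lindel\"ofness.

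In detail, I would first observe that since $A$ is $\theta$-closed, for every point $x\in X\setminus A$ one can choose an open neighbourhood $U_x$ of $x$ with $\overline{U_x}\cap A=\emptyset$; in particular $U_x\subseteq X\setminus A$. The family $\mathcal{U}=\{U_x:x\in X\setminus A\}$ is then an open cover of the subspace $X\setminus A$. By Lemma \ref{hlemma}, $hL(X)\le 2^{\chi(X)}$, and since hereditary Lindel\"of degree bounds the Lindel\"of number of every subspace, $\mathcal{U}$ admits a subcover $\{U_{x_\alpha}:\alpha<2^{\chi(X)}\}$ of $X\setminus A$.

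I would then finish by noting that $A=\bigcap_{\alpha<2^{\chi(X)}}(X\setminus\overline{U_{x_\alpha}})$. The inclusion $\subseteq$ is immediate from $\overline{U_{x_\alpha}}\cap A=\emptyset$, and the reverse inclusion is straightforward: any $y\in X\setminus A$ lies in some $U_{x_\alpha}\subseteq\overline{U_{x_\alpha}}$ and is therefore excluded from the intersection. Since each $X\setminus\overline{U_{x_\alpha}}$ is open, this exhibits $A$ as the intersection of at most $2^{\chi(X)}$ open sets, which is exactly the claim $\psi(A,X)\le 2^{\chi(X)}$.

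There is no real obstacle here; the only thing one has to get right is the interplay between the two notions of closure. The $\theta$-closure is used precisely to guarantee that the neighbourhoods $U_x$ can be chosen so that even their ordinary closures avoid $A$ (this is what makes each $X\setminus\overline{U_{x_\alpha}}$ an open set containing $A$), while the ordinary hereditary Lindel\"of number, supplied by the almost discretely Lindel\"of hypothesis through Lemma \ref{hlemma}, is what lets us prune the cover down to the desired cardinality.
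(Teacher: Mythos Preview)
Your proposal is correct and follows essentially the same argument as the paper: pick neighbourhoods $U_x$ with $\overline{U_x}\cap A=\emptyset$ using $\theta$-closedness, thin the resulting cover of $X\setminus A$ to size $\le 2^{\chi(X)}$ via Lemma~\ref{hlemma}, and write $A=\bigcap(X\setminus\overline{U_{x_\alpha}})$.
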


\begin{proof} 
For each $x \in X \setminus A$ we may fix an open neighbourhood $U_x$ of $x$ such that $\overline {U_x}\cap A=\emptyset$. By Lemma $\ref{hlemma}$, there is a set $S\subseteq X\setminus A$ such that $|S|\leq 2^{\chi(X)}$ and $\bigcup\{U_x : x\in S\}=X\setminus A$. Then $A=\bigcap \{X\setminus \overline {U_x}:x \in S\}$. 
\end{proof} 

\begin{definition}
We say that a sequence $\{x_\alpha :\alpha <\kappa \}$  is \emph{$\theta$-free} if:

$$[\{x_\alpha: \alpha < \beta \}]_\theta \cap
\overline{ \{x_\alpha: \beta \leq \alpha <\kappa\}}=\emptyset$$

\noindent for every $\beta < \kappa$. The cardinal function $F_\theta(X)$ denotes the supremum of the cardinalities of all $\theta$-free sequences contained in $X$.
\end{definition}

Recall that a sequence $\{x_\alpha: \alpha < \kappa \}$ is called \emph{free} if 
$$\overline{\{x_\alpha: \alpha < \beta \}} \cap \overline{\{x_\alpha: \beta \leq \alpha < \kappa \}}=\emptyset$$

\noindent for every $\beta < \kappa$. The cardinal function $F(X)$ is defined as the supremum of the cardinalities of free sequences contained in $X$. Free sequences are an important tool in Arhangel'skii's original solution of the Alexandroff-Urysohn problem.

Obviously, every $\theta$-free sequence is free, so $F_\theta(X) \leq F(X)$ and $F(X)=F_\theta(X)$ for every regular space $X$. However, the inequality may be strict for non-regular spaces (see Section $\ref{OAE}$).

The following lemma is proved via a simple standard argument.

\begin{lemma}\label{freeseq} In an almost discretely Lindel\"of space $X$, every free sequence has length at most $\chi(X)$. 
\end{lemma}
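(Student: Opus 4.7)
The plan is to reduce the statement to the classical Arhangel'skii inequality $F(Y) \le L(Y) \cdot t(Y)$ (which holds in every $T_2$ space) applied to a suitable Lindel\"of subspace.

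First I would show that every free sequence $\{x_\alpha : \alpha < \kappa\}$ in $X$ is automatically a discrete subset of $X$. Fix $\beta < \kappa$. Splitting at $\beta$ gives $\overline{\{x_\alpha : \alpha < \beta\}} \cap \overline{\{x_\alpha : \alpha \ge \beta\}} = \emptyset$, and since $x_\beta$ lies in the second closure it is not in the first, so some neighbourhood $U_\beta$ of $x_\beta$ misses $\{x_\alpha : \alpha < \beta\}$. Splitting at $\beta+1$ symmetrically produces a neighbourhood $V_\beta$ of $x_\beta$ missing $\{x_\alpha : \alpha > \beta\}$. Then $U_\beta \cap V_\beta$ witnesses that $x_\beta$ is isolated in the sequence.

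Next, because $X$ is almost discretely Lindel\"of I can pick a Lindel\"of subspace $L \subseteq X$ with $\{x_\alpha : \alpha < \kappa\} \subseteq L$. Since closures in $L$ are closures in $X$ intersected with $L$, disjointness of closures in $X$ descends to disjointness in $L$, so the sequence remains free in $L$. I would then invoke the hereditariness of tightness (a one-line check: if $x \in \overline{A}^L \setminus A$ with $A \subseteq L$, then $x \in \overline{A}^X$ and any small witness $B \subseteq A$ provided by $t(X)$ also witnesses tightness in $L$) together with the general inequality $t(X) \le \chi(X)$ to conclude $t(L) \le \chi(X)$.

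Finally, applying Arhangel'skii's classical bound $F(L) \le L(L) \cdot t(L)$ to the Lindel\"of Hausdorff space $L$ yields $F(L) \le \omega \cdot \chi(X) = \chi(X)$, so the length of the free sequence is at most $\chi(X)$. The only substantive point is the observation that free sequences are discrete, which unlocks the almost discretely Lindel\"of hypothesis; everything after that is routine, which matches the paper's description of the argument as ``simple standard.''
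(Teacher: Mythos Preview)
Your argument is correct and is precisely the ``simple standard argument'' the paper alludes to (the paper in fact omits the proof entirely). One cosmetic remark: the inequality $F(L)\le L(L)\cdot t(L)$ does not actually need the Hausdorff hypothesis you mention, which is convenient since the paper's standing assumption is only $T_1$; otherwise your write-up is exactly what is intended.
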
 

Recall that the \emph{closed pseudocharacter of the point $x$ in the space $X$} ($\psi_c(x,X)$) is defined as the minimum cardinal $\kappa$ such that there is a family $\{U_\alpha: \alpha < \kappa \}$ of neighbourhoods of $x$ such that $\bigcap \{\overline{U_\alpha}: \alpha < \kappa \}=\{x\}$. The closed pseudocharacter of the space $X$ is then defined as $\psi_c(X)=\sup \{\psi_c(x,X): x \in X \}$. It is easy to see that $\psi_c(X) \leq \chi(X)$, for every Hausdorff space $X$. 

Recall that a space $X$ is \emph{Urysohn} if for every pair of distinct points $x, y \in X$ there are open neighbourhoods $U$ of $x$ and $V$ of $y$ such that $\overline{U} \cap \overline{V}=\emptyset$.

The proof of the following theorem is a variation on the proof of Theorem 2.3 from \cite{B}.

\begin{theorem} \label{mainfree}
Let $X$ be a space such that $F_\theta(X)  \leq
\kappa$ and $\psi([F]_\theta, X) \leq 2^\kappa$, for every
$\theta$-free sequence $F \subset X$. Then there exists $A\subset
X$ such that   $|A| \leq 2^\kappa$ and $X=[A]_\theta$.
\end{theorem}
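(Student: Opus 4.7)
The strategy is an elementary submodel closing-off argument in the spirit of Arhangel'skii's proof. Fix a $\kappa$-closed $M\prec H(\mu)$ with $|M|=2^\kappa$, $2^\kappa+1\subseteq M$, and $X\in M$. Set $A=X\cap M$, so $|A|\leq 2^\kappa$; the claim is that $X=[A]_\theta$.

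Suppose for contradiction that there is $p\in X\setminus[A]_\theta$. I build by recursion on $\alpha<\kappa^+$ points $x_\alpha\in X\cap M$ and open sets $U_\alpha\in M$ so that $[F_\alpha]_\theta\subseteq U_\alpha$, $p\notin U_\alpha$, and $x_\alpha\notin U_\gamma$ for every $\gamma\leq\alpha$, where $F_\alpha=\{x_\beta:\beta<\alpha\}$. At stage $\alpha$ the partial sequence has cardinality at most $\kappa$, so by $\kappa$-closure it lies in $M$, hence $F_\alpha$ and $[F_\alpha]_\theta$ lie in $M$. The verification below will show $F_\alpha$ is $\theta$-free, so the hypothesis gives $\psi([F_\alpha]_\theta,X)\leq 2^\kappa$; by elementarity I pick a witnessing family $\mathcal{U}_\alpha\in M$ of at most $2^\kappa$ open sets with $[F_\alpha]_\theta=\bigcap\mathcal{U}_\alpha$, and since $|\mathcal{U}_\alpha|\leq 2^\kappa$ and $2^\kappa\subseteq M$, in fact $\mathcal{U}_\alpha\subseteq M$. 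Because $p\notin[F_\alpha]_\theta$, some $U_\alpha\in\mathcal{U}_\alpha$ misses $p$. Finally, since $p$ witnesses in $H(\mu)$ the existence of a point of $X$ outside every $U_\gamma$ for $\gamma\leq\alpha$, and since the sequence $(U_\gamma)_{\gamma\leq\alpha}$ lies in $M$ by $\kappa$-closure, elementarity yields $x_\alpha\in X\cap M$ with $x_\alpha\notin U_\gamma$ for every $\gamma\leq\alpha$.

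The resulting sequence is $\theta$-free: for any $\gamma<\kappa^+$, the tail $\{x_\beta:\beta\geq\gamma\}$ is contained in the closed set $X\setminus U_\gamma$, hence so is its closure, and since $[F_\gamma]_\theta\subseteq U_\gamma$ we obtain $\overline{\{x_\beta:\beta\geq\gamma\}}\cap[F_\gamma]_\theta=\emptyset$. The same computation, restricted to partial tails, confirms that each $F_\alpha$ is $\theta$-free, validating the use of the pseudocharacter hypothesis throughout the induction. This produces a $\theta$-free sequence of length $\kappa^+$, contradicting $F_\theta(X)\leq\kappa$. The main obstacle, and the whole reason the hypothesis $\psi([F]_\theta,X)\leq 2^\kappa$ is needed, is precisely this last step: the $\theta$-freeness condition involves the \emph{closure} of the tail, not just its points, so avoiding $[F_\gamma]_\theta$ pointwise would not suffice; replacing $[F_\gamma]_\theta$ by a single open superset $U_\gamma$ missed by $p$ is exactly what lets the closed-complement argument push through to the full closure-disjointness.
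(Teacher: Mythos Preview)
Your argument is correct and follows essentially the same route as the paper's proof: both fix a $\kappa$-closed elementary submodel $M$ of size $2^\kappa$ with $2^\kappa+1\subseteq M$, set $A=X\cap M$, and, assuming some $p\notin[A]_\theta$, recursively build a $\theta$-free sequence of length $\kappa^+$ inside $X\cap M$ using the pseudocharacter hypothesis to separate $[F_\alpha]_\theta$ from $p$ by an open set in $M$. The only difference is organizational---the paper isolates the step producing $U_\alpha\in M$ with $[F_\alpha]_\theta\subseteq U_\alpha$ and $p\notin U_\alpha$ as a separate claim, whereas you inline it.
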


\begin{proof}
Let $\mu$ be a large enough regular cardinal and $M$ be a
$\kappa$-closed elementary submodel of $H(\mu)$ such that $X \in
M$, $|M|=2^\kappa$ and $2^\kappa+1 \subset M$.

\noindent {\bf Claim 1.} Let $F$ be a $\theta$-free sequence in
$X$ contained in $X \cap M$ and let $p$ be a point outside of
$[F]_\theta$. Then there is an open set $U \in M$ such that
$[F]_\theta \subset U$ and $p \notin U$.

\begin{proof}[Proof of Claim 1]

Since $F$ is a $\theta$-free sequence in $X$, the cardinality of
$F$ does not exceed $\kappa$, so $F \in M$. Hence, by
elementarity, we also have that $[F]_\theta \in M$. Now
$\psi([F]_\theta, X) \leq 2^\kappa$, so we can fix an open family
$\mathcal{U} \in M$ having cardinality $2^\kappa$, such that
$[F]_\theta=\bigcap \mathcal{U}$. Note that $\mathcal{U} \subset
M$. Now pick $U \in \mathcal{U}$ such that $p \notin U$ and note
that $U$ satisfies the requirement of Claim 1.
\renewcommand{\qedsymbol}{$\triangle$}
\end{proof}

\noindent {\bf Claim 2.} $X=[X \cap M]_\theta $. 

\begin{proof}[Proof of Claim 2]
Suppose that the statement of the claim is false and pick a point
$x\notin [X\cap M]_\theta$. Then we can
inductively find points $\{x_\alpha: \alpha < \kappa^+\} \subset
X\cap M$ and open sets $\{U_\alpha: \alpha < \kappa^+\} \subset
M$ such
that:

\begin{enumerate}
\item $[\{x_\alpha: \alpha < \beta\}]_\theta \subset U_\beta$.
\item $x \notin U_\beta$.
\item $x_\beta \notin \bigcup \{U_\alpha: \alpha \leq \beta \}$,
for every $\beta < \kappa^+$.
\end{enumerate}

To see that, suppose for a given $\delta < \kappa^+$ we have
constructed $\{(x_\alpha, U_\alpha): \alpha < \delta \}$
satisfying the three conditions above up to $\delta$. Note that
$\{x_\alpha: \alpha < \delta\}$ is a $\theta$-free sequence.
Indeed, from the first and third condition it follows that
$[\{x_\alpha: \alpha < \gamma \}]_\theta \subset U_\gamma$ and
$\{x_\alpha: \gamma \leq \alpha < \delta \} \subset X \setminus
U_\gamma$, which implies that $[\{x_\alpha: \alpha < \gamma
\}]_\theta \cap \overline{\{x_\alpha: \gamma \leq \alpha < \delta
\}}=\emptyset$.

Therefore, we can use Claim 1 to find an open set $U_\delta$ such
that $[\{x_\alpha: \alpha < \delta \}]_\theta \subset U_\delta$
and $x \notin U_\delta$.

By elementarity,  we
are now allowed to pick a point $x_\delta \in X\cap M  \setminus
\bigcup
\{U_\alpha: \alpha \leq \delta \}$ and thus continue the
induction.

It is easy to see that eventually $\{x_\alpha: \alpha <
\kappa^+\}$ is a $\theta$-free sequence in $X$ of cardinality
$\kappa^+$, which contradicts $F_\theta(X) \leq \kappa$.
\renewcommand{\qedsymbol}{$\triangle$}
\end{proof}
\end{proof}

Since in a Urysohn space $X$ the inequality $|[A]_\theta|\leq
|A|^{\chi(X)}$ holds for any set $A\subset X$ (see \cite{BC}), we immediately
get:
\begin{corollary} If $X$ is a Urysohn almost discretely
Lindel\"of space,then $|X| \le 2^{\chi(X)}$. 
\end{corollary}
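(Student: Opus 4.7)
The plan is to apply Theorem \ref{mainfree} with $\kappa=\chi(X)$ and then use the stated Urysohn inequality $|[A]_\theta| \leq |A|^{\chi(X)}$ to turn the $\theta$-dense set into a bound on $|X|$ itself. The work splits into checking the two hypotheses of the theorem, both of which follow almost immediately from lemmas already established in the excerpt.

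First I would verify that $F_\theta(X) \leq \chi(X)$. Any $\theta$-free sequence is in particular a free sequence, so $F_\theta(X) \leq F(X)$; and by Lemma \ref{freeseq}, every free sequence in an almost discretely Lindel\"of space has length at most $\chi(X)$. Next, for any $\theta$-free sequence $F \subset X$, the $\theta$-closure $[F]_\theta$ is by definition $\theta$-closed, so Lemma \ref{thetaclosurepsi} gives $\psi([F]_\theta,X) \leq 2^{\chi(X)}$. Thus the hypotheses of Theorem \ref{mainfree} are satisfied with $\kappa=\chi(X)$, yielding a set $A \subset X$ with $|A| \leq 2^{\chi(X)}$ and $X=[A]_\theta$.

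Finally, invoking the Urysohn bound $|[A]_\theta| \leq |A|^{\chi(X)}$ cited from \cite{BC}, we obtain
\[
|X| = |[A]_\theta| \leq |A|^{\chi(X)} \leq \bigl(2^{\chi(X)}\bigr)^{\chi(X)} = 2^{\chi(X)},
\]
which is the desired inequality. There is no real obstacle here; the proof is essentially an assembly of Lemma \ref{freeseq}, Lemma \ref{thetaclosurepsi}, Theorem \ref{mainfree}, and the Urysohn $\theta$-closure inequality, with the Urysohn hypothesis entering only in the last step to convert the $\theta$-dense set into the full space.
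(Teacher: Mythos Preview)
Your proposal is correct and follows exactly the route the paper intends: verify the hypotheses of Theorem~\ref{mainfree} with $\kappa=\chi(X)$ via Lemmas~\ref{freeseq} and~\ref{thetaclosurepsi}, then apply the Urysohn $\theta$-closure inequality from \cite{BC}. The paper states the corollary as an immediate consequence without spelling out the hypothesis-checking, but your explicit verification is precisely what is meant.
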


Recall that a space is sequential if every non-closed set contains a sequence converging outside of it. It is easy to see that a closure of a subspace in a sequential space is obtained by iterating the sequential closure at most $\omega_1$ many times. Every first-countable space is sequential and every sequential space has countable tightness.

\begin{theorem} \label{mainthm}
($2^{<\mathfrak{c}} = \mathfrak{c}$). Let $X$ be a $T_2$ sequential almost discretely Lindel\"of space such that $\psi(X) \leq \mathfrak{c}$. Then $|X| \leq \mathfrak{c}$.
\end{theorem}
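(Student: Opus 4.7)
Under $2^{<\mathfrak{c}}=\mathfrak{c}$ we have $\mathfrak{c}^{\omega}=\mathfrak{c}$, and the plan is to fix a sufficiently large regular cardinal $\mu$ and choose an elementary submodel $M\prec H(\mu)$ with $|M|=\mathfrak{c}$, $\mathfrak{c}+1\subseteq M$, $[M]^{\omega}\subseteq M$, and $X\in M$. The goal is to prove $X\subseteq M$, which immediately yields $|X|\leq\mathfrak{c}$.

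The first step is to verify that $X\cap M$ is closed in $X$. Given $y\in\overline{X\cap M}$, countable tightness (which follows from sequentiality) supplies a countable $A\subseteq X\cap M$ with $y\in\overline{A}$, and by $\omega$-closure $A\in M$. Since $X$ is sequential, $\overline{A}=\bigcup_{\alpha<\omega_{1}}s^{\alpha}(A)$, where $s^{\alpha}(A)$ denotes the $\alpha$-th iterated sequential closure of $A$. An induction on $\alpha$ shows $s^{\alpha}(A)\subseteq M$: at successor stages every sequence in $s^{\alpha}(A)\subseteq M$ lies in $M$ by $\omega$-closure, and its unique limit (existing by $T_{2}$) is returned by elementarity; at limit stages one takes unions. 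Hence $y\in M$.

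Assuming for a contradiction that $x\in X\setminus M$ exists, the next step mimics the inductive construction in the proof of Theorem \ref{mainfree} to produce $\{x_{\alpha}:\alpha<\omega_{1}\}\subseteq X\cap M$ together with open sets $\{U_{\alpha}:\alpha<\omega_{1}\}\subseteq M$ satisfying $\overline{\{x_{\beta}:\beta<\alpha\}}\subseteq U_{\alpha}$, $x\notin U_{\alpha}$, and $x_{\alpha}\notin\bigcup_{\beta\leq\alpha}U_{\beta}$ for every $\alpha<\omega_{1}$. The first and third conditions make $\{x_{\alpha}:\alpha<\omega_{1}\}$ a free sequence in $X$; at each stage, elementarity furnishes $x_{\alpha}\in X\cap M$ because the statement ``$X\not\subseteq\bigcup_{\beta\leq\alpha}U_{\beta}$'' is witnessed in $H(\mu)$ by $x$ and reflects down to $M$. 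The resulting free sequence of length $\omega_{1}$ contradicts the following sharpening of Lemma \ref{freeseq}, which should be proved along the way: every free sequence in an almost discretely Lindel\"of $T_{2}$ space of countable tightness is countable. Indeed, free sequences in $T_{2}$ spaces are discrete, so by almost discrete Lindel\"ofness they lie inside a Lindel\"of subspace $L$, and in Lindel\"of spaces of countable tightness every free sequence is countable by the classical open-cover argument.

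The principal obstacle is the construction of $U_{\alpha}$ inside $M$. At stage $\alpha<\omega_{1}$, the set $F_{\alpha}=\{x_{\beta}:\beta<\alpha\}$ is countable and therefore in $M$; iterating the sequential closure gives $|\overline{F_{\alpha}}|\leq\mathfrak{c}$ under $\mathfrak{c}^{\omega}=\mathfrak{c}$, so $\overline{F_{\alpha}}\subseteq M$ (using $\mathfrak{c}\subseteq M$). What is needed is a pseudobase of $\overline{F_{\alpha}}$ of cardinality $\leq\mathfrak{c}$ inside $M$, equivalently the bound $\psi(\overline{F_{\alpha}},X)\leq\mathfrak{c}$; given this, elementarity supplies such a pseudobase in $M$ and we select $U_{\alpha}$ from it missing $x$. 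The bound is to be obtained by wrapping the discrete set $F_{\alpha}$ in a Lindel\"of subspace $L\in M$ via almost discrete Lindel\"ofness and combining the Lindel\"of property of $L$ with the pointwise pseudobases of size $\mathfrak{c}$ that sit in $M$ by virtue of $\psi(X)\leq\mathfrak{c}$ and $\mathfrak{c}\subseteq M$. The delicate technical point is that $\overline{F_{\alpha}}$ need not sit inside $L$, so one must iterate along the stages of the sequential closure to build up the pseudobase.
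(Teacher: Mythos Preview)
Your approach diverges from the paper's and leaves a genuine gap precisely at the point you flag as the ``principal obstacle.'' You need, at each countable stage $\alpha$, an open set $U_\alpha\in M$ with $\overline{F_\alpha}\subseteq U_\alpha$ and $x\notin U_\alpha$; equivalently, you need $\psi(\overline{F_\alpha},X)\le\mathfrak{c}$ so that elementarity hands you a pseudobase inside $M$. But nothing in the hypotheses yields this. The Lindel\"of set $L\supseteq F_\alpha$ does not contain $\overline{F_\alpha}$, and the ``iterate along the sequential closure'' idea breaks down immediately: already $s^1(F_\alpha)$ may have size $\mathfrak{c}$, so covering it pointwise by open sets from $M$ that miss $x$ produces a family of size $\mathfrak{c}$, which your merely $\omega$-closed model cannot absorb. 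Even upgrading to a $<\!\mathfrak{c}$-closed model (which is what $2^{<\mathfrak{c}}=\mathfrak{c}$ really buys) does not help here, since the families in question have size exactly $\mathfrak{c}$. In the paper's Urysohn theorem the analogous bound $\psi([F]_\theta,X)\le 2^{\chi(X)}$ comes from $hL(X)\le 2^{\chi(X)}$, but that uses full character; with only $\psi(X)\le\mathfrak{c}$ and sequentiality there is no such $hL$ bound available.

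The paper's proof is organized quite differently. It takes $M$ to be $<\!\mathfrak{c}$-closed (this is the real use of $2^{<\mathfrak{c}}=\mathfrak{c}$) and, after noting that $X\cap M$ is closed, argues instead that $d(X)\le\mathfrak{c}$, which suffices for sequential spaces. Assuming $d(X)\ge\mathfrak{c}^+$, one finds a left-separated set $L\in M$ of size $\mathfrak{c}^+$ and a point $p\in L\setminus M$. The pointwise pseudobases (which \emph{are} available, since $\psi(X)\le\mathfrak{c}$ and $\mathfrak{c}+1\subseteq M$) give, for each $x\in X\cap M$, an open $U_x\in M$ with $x\in U_x$ and $p\notin U_x$. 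The key combinatorial claim is that some $<\!\mathfrak{c}$-sized subfamily of $\{U_x\}$ already covers $L\cap M$: otherwise one extracts from $L\cap M$ a set of size $\mathfrak{c}$ that is simultaneously right- and left-separated, hence contains a discrete set $D$ of size $\mathfrak{c}$; almost discrete Lindel\"ofness then traps $D$ inside a Lindel\"of $Y$, and Lindel\"ofness of $Y\cap M$ (closed in $Y$) forces $D$ into an initial segment of the cover, a contradiction. Finally, $<\!\mathfrak{c}$-closure puts this small subfamily $\mathcal V$ into $M$, and elementarity gives $L\subseteq\bigcup\mathcal V$, contradicting $p\in L\setminus\bigcup\mathcal V$. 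Note how the paper never needs to bound the pseudocharacter of a \emph{closed set}; it works entirely with pointwise pseudobases and uses $<\!\mathfrak{c}$-closure to reflect a small cover back into $M$.
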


\begin{proof}
Let $\mu$ be a large enough regular cardinal. Let $M$ be a $<\mathfrak{c}$-closed elementary submodel of $H(\mu)$ such that $|M| = \mathfrak{c}$, $\mathfrak{c}+1 \subset M$ and $X \in M$.

From the fact that $X$ is a Hausdorff sequential space and the fact that $M$ is $\omega$-closed it follows that $X \cap M$ is a closed subset of $X$.

We claim that $d(X) \leq \mathfrak{c}$, and that would finish the proof, because every sequential space of density continuum has cardinality continuum.

Suppose by contradiction that $d(X) \geq \mathfrak{c}^+$. Using that, it is easy to find a left-separated subset $L$ of $X$ having cardinality $\mathfrak{c}^+$. Without loss we can assume that $L \in M$.

Since $L$ has cardinality larger than the continuum, we can pick a point $p \in L \setminus M$. Fix a point $x \in X \cap M$. Then we can find a family $\mathcal{U}_x \in M$ of cardinality continuum such that $\bigcap \mathcal{U}_x=\{x\}$. Since $|\mathcal{U}_x| \leq \mathfrak{c}$ and $\mathfrak{c}+1 \subset M$ we actually have that $\mathcal{U}_x \subset M$. Hence, for every $x \in X \cap M$, we can find an open set $U_x \in M$ such that $x \in U_x$ and $p \notin U_x$.

Now $\mathcal{U}=\{U_x: x \in X \cap M \}$ is an open cover of $X \cap M$.

\noindent {\bf Claim.} There is a $<\mathfrak{c}$-sized subcollection of $\mathcal{U}$ covering $L \cap M$.

\begin{proof}[Proof of Claim]
If $L \cap M$ had cardinality smaller than the continuum, this would be trivially true. So we can assume that $|L \cap M| =\mathfrak{c}$.

Let $\{U_\alpha: \alpha < \mathfrak{c}\}$ be an enumeration of $\mathcal{U}$ in type $\mathfrak{c}$ and set $V_\alpha=U_\alpha \setminus \bigcup \{U_\beta: \beta < \alpha\}$. Suppose by contradiction that the statement of the Claim is not true. Then the set $S=\{\alpha < \mathfrak{c}: V_\alpha \cap (L \cap M) \neq \emptyset \}$ has cardinality continuum.

Pick a point $x_\alpha \in V_\alpha \cap L$, for every $\alpha \in S$. Then $R=\{x_\alpha: \alpha \in S \}$ is a set of size continuum which is both right-separated and left-separated. So by 2.12 of \cite{J} the set $R$ contains a discrete set $D$ having cardinality continuum.

Since $X$ is almost discretely Lindel\"of, we can find a Lindel\"of subspace $Y\subset X$. such that $D \subset Y$. Now, $X \cap M$ being closed, the set $Y \cap M$ is also Lindel\"of, and since $\mathcal{U}$ covers $Y \cap M$, we can find an ordinal $\delta < \mathfrak{c}$ such that $D \subset Y \cap M \subset \bigcup \{U_\alpha: \alpha < \delta\}$. But since $D$ has cardinality continuum, there must be $\gamma > \delta$ such that $D \cap V_\gamma \neq \emptyset$ and this contradicts the fact that $V_\gamma$ is disjoint from $\bigcup \{U_\alpha: \alpha < \delta\}$.
\renewcommand{\qedsymbol}{$\triangle$}
\end{proof}

Fix a subcollection $\mathcal{V} \subset \mathcal{U}$ of cardinality smaller than the continuum such that $L \cap M \subset \bigcup \mathcal{V}$.

Since $M$ is $<\mathfrak{c}$-closed we have that $\mathcal{V} \in M$ and since $L$ is also an element of $M$ it follows that: $M \models L \subset \bigcup \mathcal{V}$.

By elementarity $H(\mu) \models L \subset \bigcup \mathcal{V}$, but that is a contradiction because $p \in L \setminus \bigcup \mathcal{V}$.

\end{proof}

\begin{theorem} \label{otherthm} ($2^{<\mathfrak{c}}=\mathfrak{c}$)
Let $X$ be an almost discretely Lindel\"of Hausdorff space such that $\psi_c(X) \cdot t(X)=\omega$. Then $|X| \leq \mathfrak{c}$.
\end{theorem}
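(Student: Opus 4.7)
The plan is to adapt the proof of Theorem \ref{mainthm}, substituting countable tightness plus $\psi_c(X)=\omega$ for sequentiality plus $\psi(X)\leq\mathfrak{c}$. Fix a sufficiently large regular $\mu$ and a $<\mathfrak{c}$-closed elementary submodel $M\prec H(\mu)$ with $|M|=\mathfrak{c}$, $\mathfrak{c}+1\subset M$, and $X\in M$.

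The key preliminary step is to verify that $X\cap M$ is closed in $X$. Given $p\in\overline{X\cap M}$, countable tightness produces a countable $B\subseteq X\cap M$ with $p\in\overline{B}$, and $\omega$-closedness of $M$ (ensured by $<\mathfrak{c}$-closedness) gives $B\in M$, hence $\overline{B}\in M$. The subspace $\overline{B}$ is Hausdorff, contains the countable dense set $B$, and inherits $t\leq\omega$ and $\psi_c\leq\omega$ from $X$, so the classical Hausdorff cardinal inequality $|Y|\leq d(Y)^{t(Y)\psi_c(Y)}$ yields $|\overline{B}|\leq\omega^\omega=\mathfrak{c}$. Combined with $\overline{B}\in M$ and $\mathfrak{c}+1\subset M$, the submodel closure principle recalled in the introduction forces $\overline{B}\subseteq M$, and in particular $p\in M$.

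Once $X\cap M$ is closed, the rest of the argument mirrors Theorem \ref{mainthm} nearly verbatim. Assuming for contradiction that $d(X)\geq\mathfrak{c}^+$, pick a left-separated $L\in M$ of cardinality $\mathfrak{c}^+$ and a point $p\in L\setminus M$. For each $x\in X\cap M$, elementarity together with $\psi_c(x,X)=\omega$ and $\mathfrak{c}+1\subset M$ produces a countable family $\{W^x_n:n<\omega\}\subset M$ with $\bigcap_n\overline{W^x_n}=\{x\}$; since $p\neq x$, some $W^x_n$ has $p\notin\overline{W^x_n}$, and we let $U_x\in M$ be any such $W^x_n$. Set $\mathcal{U}=\{U_x:x\in X\cap M\}$. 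The Claim of Theorem \ref{mainthm} then applies without essential change: if $\mathcal{U}$ failed to cover $L\cap M$ with fewer than $\mathfrak{c}$ members, the sets $V_\alpha=U_\alpha\setminus\bigcup_{\beta<\alpha}U_\beta$ would meet $L\cap M$ for $\mathfrak{c}$-many $\alpha$, yielding a right- and left-separated $R\subseteq L\cap M$ of size $\mathfrak{c}$, and by 2.12 of \cite{J} a discrete $D\subset R$ of size $\mathfrak{c}$; almost discrete Lindel\"ofness furnishes a Lindel\"of $Y\supset D$, closedness of $X\cap M$ makes $Y\cap M$ Lindel\"of, and a countable subcover of $Y\cap M\supset D$ drawn from $\mathcal{U}$ contradicts the disjointness of $V_\gamma$ from $\bigcup_{\beta<\gamma}U_\beta$ for any $\gamma$ beyond the indices of this subcover. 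Hence some $\mathcal{V}\subset\mathcal{U}$ with $|\mathcal{V}|<\mathfrak{c}$ covers $L\cap M$; by $<\mathfrak{c}$-closure $\mathcal{V}\in M$, elementarity lifts this to $L\subseteq\bigcup\mathcal{V}$, and $p\in L\setminus\bigcup\mathcal{V}$ delivers the contradiction.

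Thus $d(X)\leq\mathfrak{c}$, and a final application of the same Hausdorff inequality to $X$ itself gives $|X|\leq d(X)^{t(X)\psi_c(X)}=\mathfrak{c}^\omega=\mathfrak{c}$. The main technical obstacle is the closedness of $X\cap M$: in Theorem \ref{mainthm} uniqueness of sequence limits in Hausdorff spaces places the limit point in $M$ immediately, whereas in the countably tight setting one must first bound the cardinality of the closure of a countable subset of $X$ by $\mathfrak{c}$ and then appeal to the elementary-submodel closure principle.
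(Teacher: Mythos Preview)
Your proof is correct and follows essentially the same route as the paper: both adapt Theorem~\ref{mainthm} verbatim except for the argument that $X\cap M$ is closed, and both use countable tightness to pull a point of $\overline{X\cap M}$ into the closure of a countable $C\subset M$. The only minor difference is that the paper shows $\{x\}=\bigcap_n\overline{U_n\cap C}\in M$ directly (each $U_n\cap C$ is a countable subset of $M$, hence in $M$), whereas you instead bound $|\overline{C}|\le\mathfrak{c}$ and invoke $\overline{C}\subset M$; both arguments are short and equivalent in spirit.
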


\begin{proof}
Let $M$ be given as in the proof of Theorem $\ref{mainthm}$. The proof is essentially the same as the proof of Theorem $\ref{mainthm}$, except that the argument proving that $X \cap M$ is closed is different. Here it is. Let $x \in \overline{X \cap M}$. Pick a family $\{U_n: n < \omega \}$ of neighbourhoods of $x$ such that $\bigcap \{\overline{U_n}: n < \omega \}=\{x\}$. Use the fact that the tightness of $X$ is countable to pick a countable set $C \subset X \cap M$ such that $x \in \overline{C}$. Note that, since $M$ is $\omega$-closed the set $U_n \cap C$ belongs to $M$, for every $n<\omega$. From elementarity and $\omega$-closedness of $M$ again it follows that $\{x\}=\bigcap \{\overline{U_n \cap C}: n <\omega \}$ is also an element of $M$. Hence $x \in X \cap M$ and this concludes the proof that $X \cap M$ is closed.
\end{proof}

\begin{question}
Are Theorems $\ref{mainthm}$ and $\ref{otherthm}$ true in ZFC?
\end{question}

\section{Odds and ends} \label{OAE}

A \emph{cellular family} is a family of pairwise disjoint non-empty open sets. The cellularity of $X$ ($c(X)$) is defined as the supremum of the cardinalities of the cellular families in $X$.

The following is a natural generalization of the notion of an almost discretely Lindel\"of space.

\begin{definition}
We define a space $X$ to be cellular-Lindel\"of if for every cellular family $\mathcal{U}$ there is a Lindel\"of subspace $L \subset X$ such that $U \cap L \neq \emptyset$, for every $U \in \mathcal{U}$.
\end{definition}

The following proposition follows immediately from the definition

\begin{proposition} {\ \\}
\begin{enumerate}

\item Every ccc space is cellular-Lindel\"of.
\item Every Lindel\"of space is cellular-Lindel\"of. 
\item Every almost discretely Lindel\"of space is cellular-Lindel\"of. 
\end{enumerate}
\end{proposition}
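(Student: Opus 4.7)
The plan is to verify each of the three items by exhibiting an explicit Lindel\"of subspace $L$ that meets every member of a given cellular family $\mathcal{U}$. In all three cases the construction is essentially canonical, and the only thing that varies is why the resulting $L$ is Lindel\"of.

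For item (1), if $X$ is ccc then every cellular family $\mathcal{U}$ is at most countable. I would select one point $x_U \in U$ for each $U \in \mathcal{U}$ and take $L = \{x_U : U \in \mathcal{U}\}$. This $L$ is countable, hence Lindel\"of, and by construction meets every element of $\mathcal{U}$.

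For item (2), the choice $L = X$ trivially works since $X$ is itself Lindel\"of and certainly meets every non-empty open set.

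For item (3), given a cellular family $\mathcal{U}$ I would again pick a point $x_U \in U$ for each $U \in \mathcal{U}$ and set $D = \{x_U : U \in \mathcal{U}\}$. The set $D$ is discrete: for each $U \in \mathcal{U}$, the open set $U$ witnesses the isolation of $x_U$ in $D$, because the pairwise disjointness of $\mathcal{U}$ forces $U \cap D = \{x_U\}$. Since $X$ is almost discretely Lindel\"of, there is a Lindel\"of subspace $L \subset X$ with $D \subset L$, and this $L$ meets every member of $\mathcal{U}$ by construction.

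There is no real obstacle here: all three arguments are one-line consequences of the definitions, which is why the author labels the proposition as following immediately from the definition. The only mildly substantive point is the observation in item (3) that a transversal of a cellular family is automatically discrete, and even this is immediate from pairwise disjointness.
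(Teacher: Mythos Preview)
Your proposal is correct and matches what the paper has in mind: the paper gives no explicit proof, merely noting that the proposition ``follows immediately from the definition,'' and your three arguments are precisely the canonical verifications one would supply. The only substantive observation, as you note, is that a transversal of a cellular family is discrete, and this is exactly what makes item (3) go through.
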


So the cellular-Lindel\"of property turns out to be a common weakening of the countable chain condition and the Lindel\"of property, in a similar vein as the weak Lindel\"of property (see \cite{BGW}).

\begin{theorem} \label{cell}
Let $X$ be a Hausdorff cellular-Lindel\"of first-countable space. Then $c(X) \leq \mathfrak{c}$.
\end{theorem}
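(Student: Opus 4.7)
The plan is to argue by contradiction and reduce the statement directly to Arhangel'skii's Theorem applied to the Lindel\"of witness provided by the cellular-Lindel\"of property. So, suppose $c(X) \geq \mathfrak{c}^+$ and fix a cellular family $\mathcal{U} = \{U_\alpha : \alpha < \mathfrak{c}^+\}$ of pairwise disjoint non-empty open sets.

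By the cellular-Lindel\"of property, there is a Lindel\"of subspace $L \subset X$ such that $L \cap U_\alpha \neq \emptyset$ for every $\alpha < \mathfrak{c}^+$. For each $\alpha < \mathfrak{c}^+$ choose a point $x_\alpha \in L \cap U_\alpha$, and set $D = \{x_\alpha : \alpha < \mathfrak{c}^+\}$. Since the family $\mathcal{U}$ is cellular, for each $\alpha$ the open set $U_\alpha$ satisfies $U_\alpha \cap D = \{x_\alpha\}$, so $D$ is a discrete subspace of $X$ of cardinality $\mathfrak{c}^+$, and in particular $D \subset L$.

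Now $L$, being a subspace of the first-countable Hausdorff space $X$, is itself first-countable and Hausdorff; moreover $L$ is Lindel\"of by the choice above. Hence Arhangel'skii's Theorem gives $|L| \leq \mathfrak{c}$. But then $\mathfrak{c}^+ = |D| \leq |L| \leq \mathfrak{c}$, a contradiction. This forces $c(X) \leq \mathfrak{c}$, as required.

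There is essentially no obstacle here: the only substantive ingredient beyond the definitions is Arhangel'skii's classical bound $|Y| \leq 2^{\chi(Y)}$ for Lindel\"of Hausdorff $Y$, applied to the subspace $L$. The one point to verify carefully is that selecting one point from each member of a disjoint open family automatically yields a discrete set, which uses only that the $U_\alpha$ are pairwise disjoint and open.
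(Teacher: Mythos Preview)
Your proof is correct and follows the same idea as the paper's: apply Arhangel'skii's Theorem to the Lindel\"of witness $L$ to bound its cardinality by $\mathfrak{c}$, and observe that $L$ must meet each member of the cellular family in a distinct point. The paper argues directly rather than by contradiction, and your remark that $D$ is discrete, while true, is not needed---the contradiction comes simply from the $x_\alpha$ being pairwise distinct (since the $U_\alpha$ are pairwise disjoint), giving $|L| \geq \mathfrak{c}^+$.
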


\begin{proof}
Let $\mathcal{U}$ be a cellular family. Since $X$ is cellular-Lindel\"of we can find a Lindel\"of space $L \subset X$ such that $L \cap U \neq \emptyset$, for every $U \in \mathcal{U}$. But every Lindel\"of first-countable space has cardinality at most the continuum, so $|L| \leq \mathfrak{c}$ and hence $|\mathcal{U}| \leq \mathfrak{c}$.
\end{proof}

\begin{corollary}
Every Hausdorff cellular-Lindel\"of first countable space has cardinality at most $2^{\mathfrak{c}}$.
\end{corollary}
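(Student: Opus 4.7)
The plan is to combine Theorem \ref{cell} with a classical cardinal inequality, so the proof reduces to essentially a one-line verification. Since $X$ is first-countable, we have $\chi(X) \leq \omega$, and Theorem \ref{cell} gives us $c(X) \leq \mathfrak{c}$.

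The key tool I would invoke is the Hajnal--Juh\'asz inequality (see \cite{J}), which states that $|X| \leq 2^{c(X) \cdot \chi(X)}$ for every Hausdorff space $X$. Applying this directly with the two bounds just established yields
$$|X| \leq 2^{c(X) \cdot \chi(X)} \leq 2^{\mathfrak{c} \cdot \omega} = 2^{\mathfrak{c}},$$
which is the desired conclusion.

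There is no real obstacle here: the only substantive input is Theorem \ref{cell}, which is already in hand, and the Hajnal--Juh\'asz inequality, which is classical and does not need reproving. The whole argument is genuinely two lines, and the corollary should be stated and proved immediately after Theorem \ref{cell} as a direct consequence.
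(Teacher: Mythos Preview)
Your proof is correct and is essentially identical to the paper's own argument: the paper also derives the bound directly from Theorem~\ref{cell} together with the Hajnal--Juh\'asz inequality $|X| \leq 2^{\chi(X) \cdot c(X)}$.
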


\begin{proof}
This is an immediate consequence of the Hajnal-Juh\'asz inequality $|X| \leq 2^{\chi(X) \cdot c(X)}$ (see, for example, \cite{J}, 2.15 b)) and Theorem $\ref{cell}$.
\end{proof}

\begin{example}
There are cellular-Lindel\"of non-linearly Lindel\"of Tychonoff spaces in ZFC.
\end{example}

\begin{proof}
Let $X=\Sigma(2^\kappa)=\{x \in 2^{\kappa}: |x^{-1}(1)| \leq \aleph_0 \}$ with the topology induced from the usual product topology on $2^\kappa$. Then $X$ is ccc and hence it's cellular Lindel\"of. Moreover $X$ is countably compact, so it can't be linearly Lindel\"of, or otherwise it would be compact.
\end{proof}

The above example should be contrasted with the fact that no example of a regular almost discretely Lindel\"of non-Lindel\"of space is known in ZFC. Also it seems that not even a consistent $T_1$ example of a discretely Lindel\"of non-Lindel\"of space is known at the moment.

Recall that a space is weakly Lindel\"of if every open cover has a countable subcollection with a dense union (see \cite{BGW}).

\begin{example} \label{wLncL}
There is a weakly Lindel\"of $T_2$ non-cellular Lindel\"of space.
\end{example}

\begin{proof}
This is Example 2.3 from \cite{BGW}. Let $\kappa$ be a cardinal larger than the continuum and let $A$ be a countable dense subset of the irrationals. Define a topology on $X=(\mathbb{Q} \times \kappa) \cup A$ by declaring a basic neighbourhood of a point $(x, \alpha)$, where $x \in \mathbb{Q}$ and $\alpha < \kappa$,  to be $(U \cap \mathbb{Q}) \times \{\alpha\}$, where $U$ is an open Euclidean interval containing $x$ and a basic neighbourhood of a point $y \in A$ to be of the form $(U \cap A) \cup ((U \cap \mathbb{Q}) \times \kappa)$. This space is weakly Lindel\"of because every open set containing $A$ is dense in $X$ (see \cite{BGW}). However, it is not cellular-Lindel\"of because it is first-countable but $c(X)>\mathfrak{c}$.
\end{proof}

Note that Example $\ref{wLncL}$ is only Hausdorff.

\begin{question}
Is there a Tychonoff example of a weakly Lindel\"of non-cellular Lindel\"of space?
\end{question}

\begin{question}
Is there a cellular-Lindel\"of non-weakly Lindel\"of space?
\end{question}

\begin{question}
Is it true that every first-countable cellular-Lindel\"of Hausdorff space has cardinality at most continuum?
\end{question}

We conclude with a few more applications of the notion of $\theta$-free sequence. But first of all, let us note how the inequality $F_\theta(X) < F(X)$ can occur even for \emph{Urysohn spaces} (that is spaces where each pair of distinct points can be separated by open neighbourhoods with disjoint closures).

\begin{example} A Urysohn space $X$ such that
$F_\theta(X)<F(X)$. \end{example}

\begin{proof}  
Let $X=K (\omega)$ be the Kat\v{e}tov extension of $\omega$. Recall that  the underlying set of $K(\omega)$ is the same as the \v{C}ech-Stone compactification of the integers $\beta \omega$, that is, the set of all ultrafilters on $\omega$ (principal ultrafilters are identified with points of $\omega$ in the obvious way) but a local base at $p \in K(\omega)$ in the Kat\v{e}tov extension is given by $\{\{p\} \cup A : A\in p\}$.  Note that $K(\omega)\setminus \omega$ is a closed discrete set of cardinality $2^\mathfrak{c}$. Therefore, $F(K(\omega))=2^\mathfrak{c}$. 

The topology of $K(\omega)$ is finer than the topology of $\beta \omega$, however, for every $p \in K(\omega) \setminus \omega$, the topologies induced on $\{p\} \cup \omega$ by $K(\omega)$ and $\beta \omega$ are the same. Combining this with the observation that $\omega$ is dense in $K(\omega)$ we see that, for every open set $U \subset K(\omega)$, we have $\overline
U=Cl_{K(\omega)}(U)=Cl_{K(\omega)}(U\cap
\omega)=Cl_{\beta\omega}(U\cap \omega)$. Therefore the closure of an open set in $K(\omega)$ is actually a clopen set in $\beta \omega$.  

 Let
$S=\{x_\alpha :\alpha <\kappa \}$ be a $\theta$-free  
sequence in
$K(\omega)$.   Fix $\alpha
<\kappa $. For every $\gamma\in \kappa \setminus \alpha $ there
exists an open neighbourhood $U_\gamma$ of $x_\gamma$ such that
$\overline {U_\gamma}\cap \{x_\beta:\beta<\alpha \}=\emptyset $.
Since the set $\bigcup\{\overline{U_\gamma}:\gamma\in \kappa \setminus
\alpha \}$ is open in $\beta \omega$, we see that  $S$ is a left-separated set in $\beta \omega$.  Thus, we have that $|S| \le \mathfrak c$ and hence $F_\theta(K(\omega))\le \mathfrak c$.
\end{proof}

Given a space $X$, a set $A\subseteq X$ is $\theta$-dense in $X$
if $[A]_\theta=X$. The $\theta$-density $\theta d(X)$ is the smallest cardinality of
a $\theta$-dense subset of $X$.

 \begin{theorem} \label{thmcaliber}  Let $X$  be a space. If 
a cardinal $\lambda $ satisfying 
$F_\theta(X)<\lambda \le \left(2^{F_\theta(X)}\right)^+$
is a caliber of $X$, then $\theta d(X)\le 2^{F_\theta(X)}$.
\end{theorem}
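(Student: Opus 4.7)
Let $\kappa = F_\theta(X)$, and suppose toward contradiction that $\theta d(X) > 2^\kappa$. Since $\lambda \le (2^\kappa)^+$, any subset of $X$ of cardinality strictly less than $\lambda$ has cardinality at most $2^\kappa$, hence is not $\theta$-dense; equivalently, its $\theta$-closure is a proper subset of $X$. The strategy is to use this failure, together with caliber, to manufacture a $\theta$-free sequence of length $\lambda > \kappa$, contradicting $F_\theta(X) = \kappa$.

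By transfinite recursion on $\alpha < \lambda$, I would pick a point $x_\alpha \in X$ together with an open neighborhood $U_\alpha$ of $x_\alpha$ such that $\overline{U_\alpha} \cap [F_\alpha]_\theta = \emptyset$, where $F_\alpha = \{x_\beta : \beta < \alpha\}$. This is possible because at stage $\alpha$, $|F_\alpha| < \lambda \le (2^\kappa)^+$ forces $|F_\alpha| \le 2^\kappa < \theta d(X)$, so $[F_\alpha]_\theta \ne X$; any point $x_\alpha$ outside the $\theta$-closed set $[F_\alpha]_\theta$ then admits an open neighborhood whose closure also misses $[F_\alpha]_\theta$.

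Next, applying the caliber hypothesis to the $\lambda$-family $\{U_\alpha : \alpha < \lambda\}$ yields a subset $A \subseteq \lambda$ of cardinality $\lambda$ together with a point $p \in \bigcap_{\alpha \in A} U_\alpha$. Enumerating $A = \{\alpha_\xi : \xi < \lambda\}$ in increasing order, write $B_\xi = \{x_{\alpha_\eta} : \eta < \xi\}$ and $T_\xi = \{x_{\alpha_\eta} : \eta \ge \xi\}$. For every $\eta \ge \xi$ one has $B_\xi \subseteq F_{\alpha_\eta}$, whence $[B_\xi]_\theta \subseteq [F_{\alpha_\eta}]_\theta$ and $\overline{U_{\alpha_\eta}} \cap [B_\xi]_\theta = \emptyset$; in particular, $T_\xi \subseteq \bigcup_{\eta \ge \xi} U_{\alpha_\eta} \subseteq X \setminus [B_\xi]_\theta$, and $p \notin [B_\xi]_\theta$ since $p \in \overline{U_{\alpha_\xi}}$.

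The main step, and the subtlest point, is to upgrade this to $\overline{T_\xi} \cap [B_\xi]_\theta = \emptyset$, which would certify that $\{x_{\alpha_\xi} : \xi < \lambda\}$ is a $\theta$-free sequence. The obstacle is that $\overline{T_\xi}$ may a priori lie in $\overline{\bigcup_{\eta \ge \xi} U_{\alpha_\eta}}$, which can be strictly larger than $\bigcup_{\eta \ge \xi} \overline{U_{\alpha_\eta}}$. This is precisely where the caliber-supplied point $p$ should do its work: a hypothetical $y \in [B_\xi]_\theta \cap \overline{T_\xi}$ would have to be separated from $p$ by neighborhoods $W_\eta \ni y$ disjoint from $U_{\alpha_\eta}$ for every $\eta \ge \xi$ (using $y \notin \overline{U_{\alpha_\eta}}$ and $p \in U_{\alpha_\eta}$), while simultaneously each $W_\eta$ must meet $T_\xi$. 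Pushing this tension through a uniform selection argument supported by the coherent witness $p$ should preclude such a $y$, completing the verification of $\theta$-freeness and producing the required contradiction with $F_\theta(X) = \kappa$.
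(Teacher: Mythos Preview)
Your plan has a genuine gap at exactly the point you flag as ``subtlest.'' From the construction you only get, for each $\eta\ge\xi$, that $x_{\alpha_\eta}\in U_{\alpha_\eta}$ with $\overline{U_{\alpha_\eta}}\cap[B_\xi]_\theta=\emptyset$; hence $T_\xi\subseteq\bigcup_{\eta\ge\xi}U_{\alpha_\eta}$ and $T_\xi\cap[B_\xi]_\theta=\emptyset$. But nothing prevents a point $y\in[B_\xi]_\theta$ from lying in $\overline{T_\xi}$: each neighborhood of $y$ may well meet $T_\xi$ at some $x_{\alpha_{\eta'}}$ with $\eta'\ne\eta$, while still avoiding $U_{\alpha_\eta}$. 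The caliber witness $p$ gives only that $p\in U_{\alpha_\eta}$ for all $\eta$, which tells you $p\ne y$ and that $y$ and $p$ can be separated \emph{one $\eta$ at a time}; it does not produce a single neighborhood of $y$ missing all of $T_\xi$, nor any coherent selection that would yield a contradiction. The ``tension'' you describe is not actually there, and the argument cannot be completed along these lines.

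The paper's proof avoids this by reversing the order of choices. One first builds, for $\alpha<\lambda$, open sets $U_\alpha$ with $[A_\alpha]_\theta\cap\overline{U_\alpha}=\emptyset$, where $A_\alpha$ is a set of size $\le 2^\kappa$ that already contains a canonical point $\eta(\bigcap\mathcal V)$ for every $\le\kappa$-sized subfamily $\mathcal V$ of $\{U_\beta:\beta<\alpha\}$ with nonempty intersection. After caliber yields $S$ of size $\lambda$ with $\bigcap_{\alpha\in S}U_\alpha\ne\emptyset$, one then \emph{defines} $x_\alpha=\eta\bigl(\bigcap_{\xi\le\alpha}U_{f(\xi)}\bigr)$ for $\alpha<\kappa^+$, where $f:\lambda\to S$ is increasing. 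The key payoff is that by design $x_\beta\in U_{f(\alpha)}$ for every $\beta\ge\alpha$, so the entire tail sits inside the single open set $U_{f(\alpha)}$, and hence $\overline{\{x_\beta:\beta\ge\alpha\}}\subseteq\overline{U_{f(\alpha)}}$ is disjoint from $[A_{f(\alpha)}]_\theta\supseteq[\{x_\beta:\beta<\alpha\}]_\theta$. This is precisely the containment your approach lacks: you pick $x_\alpha$ before the caliber step and only know $x_\alpha\in U_\alpha$, not that later points lie in earlier $U$'s.
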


\begin{proof} Let $F_\theta(X)=\kappa $ and assume by
contradiction that
the $\theta$-density of $X$ is bigger than $2^\kappa $.  Fix a
choice
function $\eta :\mathcal P(X)\setminus \{\emptyset \}\rightarrow
X$.
We
will define by induction  an increasing family $\{A_\alpha
:\alpha
<\lambda  \}$ of subsets of $X$ of cardinality not exceeding
$2^\kappa $ and a family $\{U_\alpha :\alpha <\lambda  \}$ of
non-empty  open subsets of $X$ in such a way that:

\begin{enumerate}

\item $[A_\alpha ]_\theta \cap \overline {U_\alpha
}=\emptyset$;
\item if $\mathcal V\subseteq \{U_\beta:\beta<\alpha \}$
satisfies
$|\mathcal V|\le \kappa $ and $\bigcap \mathcal V\ne \emptyset$,
then
$\eta (\bigcap\mathcal V)\in A_\alpha
$.

\end{enumerate}

To justify the inductive construction, let us assume to have
already defined the sets $\{A_\beta :\beta<\alpha \}$ and
$\{U_\beta :\beta<\alpha \}$.  Since $\alpha <\lambda  $ and
$ \lambda \le \left(2^{F_\theta(X)}\right)^+$, we have 
$|\{U_\beta
:\beta<\alpha \}|\le|\alpha| \le 
2^{\kappa }$. Consequently,  the set $B=\{\eta (\bigcap \mathcal
V):
\mathcal
V\subseteq 
\{U_\beta :\beta<\alpha \}, |\mathcal V|\le \kappa $ and $\bigcap
\mathcal V\ne \emptyset \}$ has cardinality not
exceeding $2^\kappa $. Then, let $A_\alpha
=B\cup\bigcup\{A_\beta :\beta<\alpha \}$. 
As we are assuming that the $\theta$-density of $X$ is bigger
than
$2^\kappa  $,  we may find a non-empty open set $U_\alpha $
such that $ [A_\alpha ]_\theta\cap \overline {U_\alpha
}=\emptyset $. 

Since
$\lambda $ is a caliber of $X$, there exists  a set $S\subseteq
\lambda $ such that $|S|=\lambda $ and $\bigcap\{U_\alpha :\alpha
\in S\}\ne \emptyset $. We may fix     an increasing
mapping $f:\lambda  \rightarrow S$. Observe now that we are
assuming $\kappa ^+\le \lambda $.    For any $\alpha
<\kappa ^+$ let $x_\alpha =\eta (\bigcap \{U_{f(\xi)} :\xi\le
\alpha
\})$.  We claim that  the set $\{x_\alpha  :\alpha <\kappa ^+\}$
so obtained 
is a $\theta$-free sequence in $X$. To check this,  fix $\alpha 
<\kappa
^+$ and observe that for each $\beta <\alpha $ we have $x_\beta
\in A_{f(\beta)+1}\subseteq  A_{f(\alpha )}$.  Moreover, for each
$\beta\ge \alpha $ the
set $U_{f(\alpha )}$ occurs in the definition of $x_\beta$ and
consequently $x_\beta \in U_{f(\alpha )}$.  This means that
$\{x_\beta :\beta<\alpha \}\subseteq A_{f(\alpha )}$ and
$\{x_\beta:\alpha \le \beta<\kappa ^+\}\subseteq U_{f(\alpha )}$.
Therefore $ [\{x_\beta :\beta<\alpha \}]_\theta\cap
\overline {\{x_\beta :\alpha \le \beta<\kappa ^+\}}\subseteq
[A_{f(\alpha )}]_\theta \cap \overline {U_{f(\alpha
)}}=\emptyset$.
The validity of the claim contradicts the hypothesis and  the
proof is then complete. 

\end{proof}

\begin{corollary}  
Let $X$ be a regular space. If a cardinal $\lambda$ satisfying $F(X) < \lambda \leq (2^{F(X)})^+$
is a caliber of $X$, then $d(X) \leq 2^{F(X)}$
\end{corollary}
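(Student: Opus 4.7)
The plan is to derive this corollary directly from Theorem \ref{thmcaliber} by noting that for regular spaces the $\theta$-versions of the relevant cardinal invariants reduce to their ordinary counterparts. Concretely, I would verify two identities: $F_\theta(X) = F(X)$ and $\theta d(X) = d(X)$ whenever $X$ is regular. Once these are established, the hypotheses of the corollary translate verbatim into the hypotheses of the theorem, and the conclusion $\theta d(X) \le 2^{F_\theta(X)}$ becomes $d(X) \le 2^{F(X)}$.

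First I would recall that the equality $F_\theta(X) = F(X)$ for regular $X$ has already been observed in the paper, immediately after the definition of $\theta$-free sequences; so this half requires no further work. For the second identity, I would point out that in a regular space every closed set $A$ is $\theta$-closed: given $x \notin A$, regularity supplies disjoint open sets $U \ni x$ and $V \supseteq A$, whence $\overline{U} \subseteq X \setminus V$ and therefore $\overline{U} \cap A = \emptyset$, so $x \notin Cl_\theta(A)$. Consequently $[A]_\theta = \overline{A}$ for every $A \subseteq X$, which yields $\theta d(X) = d(X)$.

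With these two observations in hand, the corollary is immediate: under the hypothesis $F(X) < \lambda \le (2^{F(X)})^+$ with $\lambda$ a caliber of $X$, substitute $F_\theta(X)$ for $F(X)$ and apply Theorem \ref{thmcaliber} to conclude $\theta d(X) \le 2^{F_\theta(X)}$, i.e., $d(X) \le 2^{F(X)}$. There is no real obstacle here; the corollary is essentially a notational specialization of the theorem, and the only content worth writing down is the (standard) observation that closed sets in regular spaces are $\theta$-closed.
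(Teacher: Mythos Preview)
Your proposal is correct and matches the paper's intended argument: the corollary is stated without proof precisely because it follows from Theorem~\ref{thmcaliber} once one observes that in a regular space $F_\theta(X)=F(X)$ (noted in the paper) and $[A]_\theta=\overline{A}$, hence $\theta d(X)=d(X)$. Your justification that closed sets are $\theta$-closed in regular spaces is exactly the missing line the paper leaves implicit.
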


\begin{corollary}
Let $X$ be a regular sequential space with no uncountable free sequence and $\lambda \leq \mathfrak{c}^+$ be an uncountable cardinal such that $\lambda$ is a caliber of $X$. Then $|X| \leq \mathfrak{c}$.
\end{corollary}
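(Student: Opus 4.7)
The plan is to read this as a direct consequence of Theorem \ref{thmcaliber} (or really of the first corollary after it) combined with the standard density-to-cardinality bound for sequential spaces that was already invoked in the proof of Theorem \ref{mainthm}. First I would observe that since $X$ has no uncountable free sequence, $F(X) \leq \omega$, and because $X$ is regular, $F_\theta(X) = F(X)$, so $F_\theta(X) \leq \omega$ and therefore $(2^{F_\theta(X)})^+ \leq \mathfrak{c}^+$. The hypothesis that $\lambda$ is uncountable and $\lambda \leq \mathfrak{c}^+$ then places $\lambda$ in the interval $F_\theta(X) < \lambda \leq (2^{F_\theta(X)})^+$ required by Theorem \ref{thmcaliber}.

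Since $\lambda$ is a caliber of $X$, Theorem \ref{thmcaliber} yields $\theta d(X) \leq 2^{F_\theta(X)} \leq \mathfrak{c}$. I would then note that in a regular space the $\theta$-closure operator coincides with the ordinary closure (if $x \notin \overline{A}$, regularity supplies an open $U \ni x$ with $\overline{U} \cap A = \emptyset$, so $x \notin Cl_\theta(A)$; and closed sets are automatically $\theta$-closed in regular spaces). Consequently $\theta d(X) = d(X)$ here, giving $d(X) \leq \mathfrak{c}$.

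To conclude, I would invoke the fact that for a Hausdorff sequential space, $|X| \leq d(X)^\omega$: take a dense set $D$ of size $\leq \mathfrak{c}$ and build the transfinite iteration $D_0 = D$, $D_{\alpha+1}$ = sequential closure of $D_\alpha$, taking unions at limits; then $|D_{\alpha+1}| \leq |D_\alpha|^\omega$ and, since $\mathfrak{c}^\omega = \mathfrak{c}$, an induction gives $|D_\alpha| \leq \mathfrak{c}$ for all $\alpha \leq \omega_1$; sequentiality forces $D_{\omega_1} = X$. Hence $|X| \leq \mathfrak{c}$.

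I do not expect a significant obstacle here: the entire argument is a repackaging of Theorem \ref{thmcaliber} with the cardinal arithmetic arranged so that the interval $F_\theta(X) < \lambda \leq (2^{F_\theta(X)})^+$ collapses to $\omega < \lambda \leq \mathfrak{c}^+$, together with the two standard regularity/sequentiality reductions. The only micro-step that deserves to be stated explicitly is the identification $F_\theta = F$ and $[A]_\theta = \overline A$ under regularity, which lets us turn the $\theta$-bound on density into a genuine bound on $d(X)$.
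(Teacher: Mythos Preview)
Your argument is correct and is exactly the intended derivation: the paper states this corollary without proof, as the immediate specialization of the preceding corollary (itself just Theorem~\ref{thmcaliber} plus the identifications $F_\theta=F$ and $[A]_\theta=\overline{A}$ under regularity) to the case $F(X)\le\omega$, followed by the standard $|X|\le d(X)^\omega$ bound for Hausdorff sequential spaces already used in Theorem~\ref{mainthm}. There is nothing to add.
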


\section{Acknowledgements}

The first-named author was partially supported by a grant of the Group GNSAGA of INDAM. The second-named author is grateful to FAPESP for financial support through postdoctoral grant 2013/14640-1, \emph{Discrete sets and cardinal invariants in set-theoretic topology}. Part of the research for the paper was carried out when he visited the first-named author at the University of Catania in December 2016. He thanks his colleagues there for the warm hospitality. The authors are grateful to Lajos Soukup for spotting an error in an earlier version of the paper.

\end{document}